\newtheorem{theorem}{Theorem}[section]
\newtheorem{proposition}[theorem]{Proposition}
\newtheorem{lemma}[theorem]{Lemma}
\theoremstyle{definition}
\newtheorem{remark}[theorem]{Remark}
\newtheorem{example}[theorem]{Example}
\newtheorem{question}[theorem]{Question}
\numberwithin{equation}{section}
\begin{document}

\baselineskip=15pt

\title[Positivity of vector bundles on homogeneous varieties]{Positivity of vector 
bundles on homogeneous varieties}

\author[I. Biswas]{Indranil Biswas}

\address{School of Mathematics, Tata Institute of Fundamental
Research, Homi Bhabha Road, Mumbai 400005, India}

\email{indranil@math.tifr.res.in}

\author[K. Hanumanthu]{Krishna Hanumanthu}

\address{Chennai Mathematical Institute, H1 SIPCOT IT Park, Siruseri, Kelambakkam 603103, 
India}

\email{krishna@cmi.ac.in}

\author[D. S. Nagaraj]{D. S. Nagaraj}

\address{Indian Institute of Science Education and Research, Tirupati,
Karakambadi Road, Mangalam (P.O.) Tirupati -517507,
Andhra Pradesh, India}

\email{dsn@iisertirupati.ac.in}

\subjclass[2010]{14C20, 14K12}

\keywords{Abelian variety, nef cone, ample cone, homogeneous variety, Seshadri constant}

\date{August 3, 2020}

\begin{abstract}
We study the following question: Given a vector bundle on a projective variety $X$ such 
that the restriction of $E$ to every closed curve $C \,\subset\, X$ is ample, under what 
conditions $E$ is ample? We first consider the case of an abelian variety $X$. If $E$ is 
a line bundle on $X$, then we answer the question in the affirmative. When $E$ is of 
higher rank, we show that the answer is affirmative under some conditions on $E$. We then 
study the case of $X \,=\, G/P$, where $G$ is a reductive complex affine algebraic group, and $P$ is a 
parabolic subgroup of $G$. In this case, we show that the answer to our question is 
affirmative if $E$ is $T$--equivariant, where $T\, \subset\, P$ is a fixed maximal torus.
Finally, we compute the Seshadri constant for such vector bundles defined on $G/P$.
\end{abstract}

\maketitle

\tableofcontents

\section{Introduction}\label{se1}

Let $X$ be a projective variety defined over an algebraically closed field, and let $L$ be a line 
bundle on $X$. The Nakai--Moishezon criterion says that $L$ is ample if and only if 
$L^{\text{dim}(Y)}\cdot Y \,>\, 0$ for every positive-dimensional closed subvariety $Y$ of $X$. In 
general, it is not sufficient to check this condition only for the closed curves in $X$. Mumford 
gave an example of a non-ample line bundle on a surface which intersects every closed curve 
positively; see \cite[Example 10.6]{Har} or \cite[Example 1.5.2]{La1}.

However, in some cases it turns out that to check ampleness of $L$ it suffices to verify 
the condition $L\cdot C \,>\, 0$ for all closed curves $C \,\subset\, X$. Line bundles
satisfying this condition are called \textit{strictly nef}; see
\cite{Se}. Strictly nef divisors have been studied by many
authors. Strictly nef divisors have interesting connections to many questions; for
more details, see \cite{CCP,Se}.

Mumford's example gives a strictly nef, but not ample,
divisor on a ruled surface. One can still ask the following question:

Under what situations is a strictly nef divisor ample?

Examples where the answer is known to be positive are provided by 
abelian varieties and toric varieties. 

In this note, we ask the following:

\textit{Given a vector bundle $E$ on a projective variety $X$ such 
that the restriction of $E$ to every closed curve $C \,\subset\, X$ is ample, under what 
conditions is $E$ ample?}

In \cite{HMP}, this question is studied for toric varieties; in fact, in \cite{HMP} it is 
proved that an equivariant vector bundle $E$ on a toric variety $X$ is ample if the 
restriction of $E$ to the invariant rational curves on $X$ is ample. We recall that there 
are only finitely many invariant rational curves on $X$. For a flag variety $X$ over a 
projective curve defined over $\overline{\mathbb F}_p$, a line bundle on $X$ is ample if its 
restriction to each closed curve is ample \cite{BMP}. When $X$ is a wonderful compactification, 
this question is studied in \cite{BKN}.

Here we address the above question for abelian varieties and homogeneous varieties $G/P$,
where $G$ is a reductive affine algebraic group over $\mathbb C$, and $P$ is a parabolic 
subgroup of $G$.

In Section \ref{se2}, we consider abelian
varieties. The case of line bundles on abelian
varieties is known, but we start with by giving a proof in this case,
for completeness (Proposition \ref{thm1}). We then consider vector
bundles on abelian varieties and answer the question in the
affirmative under some conditions (see Proposition \ref{prop1}). Our result
shows, in particular, that a homogeneous vector bundle on
an abelian variety has the above mentioned property. 

In Section \ref{se3}, we consider homogeneous varieties $G/P$, 
where $G$ and $P$ are as above. Let $T$ be a maximal torus of $G$ contained
in $P$. We show that a $T$--equivariant vector bundle has positive answer to our
question (see Theorem \ref{G/P}). Finally, we calculate Seshadri constants
for $T$--equivariant bundles on $G/P$ at $T$--fixed points (see Theorem \ref{sc}).

\section{Vector bundles on an abelian variety}\label{se2}

Let $k$ be an algebraically closed field. We first consider the case
of line bundles on abelian varieties. In this case it is known that
our question has a positive answer; see \cite[Proposition 1.4]{Se} for
example\footnote{We thank Patrick Brosnan for pointing out this
reference to us.}. We still include a proof below for the sake of completeness.

\begin{proposition}\label{thm1}
Let $A$ be an abelian variety defined
over $k$. Let $L$ be a line bundle over $A$ satisfying the following condition: for every pair
$(C,\, f)$, where $C$ is an irreducible smooth projective curve defined over $k$, and
$f\, :\, C\, \longrightarrow\, A$ is a non-constant morphism, the inequality
\begin{equation}\label{e3}
{\rm degree}(f^*L)\, >\, 0
\end{equation}
holds.
Then $L$ is ample.
\end{proposition}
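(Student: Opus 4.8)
The plan is to deduce ampleness from the structure theory of line bundles on abelian varieties by showing that the hypothesis forces $L$ to be simultaneously nef and nondegenerate. Write $g=\dim A$, let $\widehat A=\mathrm{Pic}^0(A)$, and consider the homomorphism $\phi_L\colon A\to\widehat A$ sending $a\mapsto t_a^*L\otimes L^{-1}$, with kernel $K(L)=\ker\phi_L$. First, the hypothesis trivially gives that $L$ is nef: for any closed curve $C\subset A$, applying \eqref{e3} to the normalization $f\colon\widetilde C\to C\hookrightarrow A$ yields $L\cdot C=\mathrm{degree}(f^*L)>0\ge 0$. The two remaining ingredients are that $K(L)$ is finite, and that this, together with nefness, implies ampleness; for the latter I will invoke Mumford's results on nondegenerate line bundles.

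The heart of the argument is to show that $K(L)$ is finite. Suppose not, and let $B=(K(L)^0)_{\mathrm{red}}\subseteq A$ be the reduced identity component, an abelian subvariety with $\dim B\ge 1$. For every $b\in B$ one has $\phi_L(b)=\mathcal O_A$, i.e. $t_b^*L\cong L$; since translation by $b\in B$ preserves $B$ and commutes with restriction to $B$, this gives $\phi_{L|_B}(b)=t_b^*(L|_B)\otimes(L|_B)^{-1}=\mathcal O_B$. Hence $\phi_{L|_B}=0$, so $L|_B$ lies in $\mathrm{Pic}^0(B)$ and is numerically trivial on $B$. As $\dim B\ge 1$, choose a closed curve $C\subseteq B$; its normalization $f\colon\widetilde C\to B\hookrightarrow A$ then satisfies $\mathrm{degree}(f^*L)=L\cdot C=0$, contradicting \eqref{e3}. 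Therefore $\dim B=0$, so $K(L)$ is finite and $\phi_L$ is an isogeny. I expect this translation of the curve-wise positivity into nondegeneracy to be the main conceptual step.

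It remains to pass from ``nef and nondegenerate'' to ``ample,'' and to do so in a characteristic-free way. Since $\phi_L$ is an isogeny, Mumford's formula $\chi(L)^2=\mathrm{degree}(\phi_L)$ gives $\chi(L)\ne 0$, and the Riemann--Roch relation $(L^g)=g!\,\chi(L)$ then yields $(L^g)\ne 0$; as $L$ is nef we have $(L^g)\ge 0$, hence $(L^g)>0$. Thus $L$ is nef and big, so $H^0(A,mL)\ne 0$ for some $m>0$. Now $mL$ is again nondegenerate and admits a nonzero global section, so by Mumford's theorem (a nondegenerate line bundle possessing a nonzero section has index $0$ and is ample) $mL$ is ample; therefore $L$ is ample. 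The only delicate point is to carry out this last implication without appealing to Kawamata--Viehweg vanishing, which would restrict us to characteristic zero; routing it through bigness and the index theorem as above keeps the argument valid over an arbitrary algebraically closed field $k$.
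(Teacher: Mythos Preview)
Your argument is correct and shares its core with the paper's: both show that $\dim K(L)=0$ by noting that $L$ restricted to the identity component of $K(L)$ is translation-invariant, hence numerically trivial, which would violate the hypothesis on curves. The difference lies in how one passes from ``$L$ nef with $K(L)$ finite'' to ``$L$ ample.'' The paper invokes a result of Moriwaki that a nef line bundle on an abelian variety is numerically $\mathbb{Q}$-equivalent to an effective divisor, then applies Mumford's criterion (effective and $K=0$ implies ample) to that effective representative. You instead use the Riemann--Roch identity $\chi(L)=(L^g)/g!$ together with $\chi(L)^2=\deg\phi_L>0$ to obtain $(L^g)>0$, deduce bigness from nefness plus positive top self-intersection, and then feed the resulting section of some $mL$ into the same Mumford criterion (equivalently, the index theorem). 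Your route is more self-contained, drawing only on the classical theory in Mumford's \emph{Abelian Varieties} and the characteristic-free fact that a nef divisor with positive top self-intersection is big, whereas the paper's route outsources the effectivity step to an external reference; both are valid over an arbitrary algebraically closed field.
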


\begin{proof}
Take a line bundle $\mathcal L$ on $A$. Let
$$
\alpha\, :\, A\times A \, \longrightarrow\, A\, , \ \ (x,\, y)\, \longmapsto\, x+y
$$
be the addition map. Consider the family of line bundles
$$
(\alpha^*{\mathcal L})\otimes p^*_1 {\mathcal L}^*\, \longrightarrow\, A\times A\,
\stackrel{p_2}{\longrightarrow}\, A\, ,
$$
where $p_1$ and $p_2$ are the projections of $A\times A$ to the first and second factor
respectively. Let
$$
\varphi_{\mathcal L}\, :\, A\, \longrightarrow\, A^\vee\,=\, {\rm Pic}^0(A)
$$
be the classifying morphism for this family. This $\varphi_{\mathcal L}$ is a group homomorphism.
Let
$$
K({\mathcal L})\, \subset\, A
$$
be the (unique) maximal connected subgroup of the reduced kernel $\text{ker}
(\varphi_{\mathcal L})_{\rm red}$.

If ${\mathcal L}\, \in\, A^\vee \,=\, {\rm Pic}^0(A)$, then $\varphi_{\mathcal L}$ is the constant morphism
$x\, \longmapsto\, 0$ \cite[p.~120]{MFK} (see after Definition 6.2), \cite[p.~11, Lemma 2.1.6]{GN}. Using this it follows that if ${\mathcal L}'$ is numerically equivalent
to $\mathcal L$, then $\varphi_{\mathcal L}\,=\, \varphi_{{\mathcal L}'}$, which in turn implies
that
\begin{equation}\label{e0}
K({\mathcal L})\,=\, K({\mathcal L}')\, .
\end{equation}
It is known that $\mathcal L$ is ample if the following two conditions hold:
\begin{enumerate}
\item the line bundle $\mathcal L$ is effective, and

\item $K({\mathcal L})\,=\, 0$.
\end{enumerate}
(See \cite[p.~288, \S~1]{Mum1}, \cite[p.~13, Theorem 2.2.2]{GN}.)

We will use the following lemma:

\begin{lemma}\label{lem1}
The line bundle $L$ in Proposition \ref{thm1} is ample if $K(L)\,=\, 0$.
\end{lemma}

\begin{proof}[{Proof of Lemma \ref{lem1}}]
Since $L$ is nef, it follows that $L$ is numerically equivalent to a $\mathbb Q$--effective
$\mathbb Q$--Cartier divisor on $A$ (see \cite[p.~811, Proposition 3.1]{Mo}). So
$L^{\otimes n}$ is numerically equivalent to an effective divisor $D$ on $A$, for some
positive integer $n$. Note that
\begin{equation}\label{e1}
\varphi_{L^n}\,=\, n\cdot \varphi_L\, .
\end{equation}

Assume that $K(L)\,=\, 0$. Consequently, from \eqref{e1} and \eqref{e0} it follows that
$K(D)\,=\, 0$. Since $D$ is also effective, using the above mentioned criterion for
ampleness it follows that $D$ is ample. This implies that $L$ is ample.
\end{proof}

Continuing with the proof of Proposition \ref{thm1}, in view of Lemma \ref{lem1}
it suffices to show that $\dim K(L) \, =\, 0$. Assume that $$\dim K(L) \, \geq\, 1\, .$$
The restriction of $L$ to the sub-abelian variety $K(L)\,\subset\, A$ will be denoted
by $L_0$. For any closed point $x\, \in\, A$, define
\begin{equation}\label{e4}
\alpha_x\,:\, A\, \longrightarrow\, A\, , \ \ y\, \longmapsto\, x+y\, .
\end{equation}
For any closed point $x\, \in\, K(L)$, let $\widehat{\alpha}_x\, :\, K(L)\,
\longrightarrow\, K(L)$ be the restriction of $\alpha_x$ in \eqref{e4} to $K(L)$.

For any $x\, \in\, K(L)$, we have $\alpha^*_x L\,=\, L$; hence we have
$$\widehat{\alpha}^*_x L_0\,=\, (\alpha^*_x L)\vert_{K(L)}\,=\, L\vert_{K(L)}
\,=\, L_0\, .$$ This implies that the line bundle $L_0$ on $K(L)$
is numerically trivial \cite[p.~74, Definition]{Mum2} and \cite[p.~86]{Mum2}. Consequently, for
any pair $(C,\, f)$, where $C$ is an irreducible smooth projective curve defined over $k$, and
$f\, :\, C\, \longrightarrow\, K(L)\, \subset\, A$ is a non-constant morphism, we have
$$
\text{degree}(f^*L)\, =\, 0\, .
$$
Since this contradicts \eqref{e3}, we conclude that
$\dim K(L) \, =\, 0$. As observed above, this implies that $L$ is ample.
\end{proof}

\subsection{Ample vector bundles on $A$}

As before $A$ is an abelian variety.
Let $E$ be a vector bundle of rank $r$ over $A$ satisfying the following two conditions:
\begin{enumerate}
\item The line bundle $\det E\, :=\, \bigwedge^r E$ has the property that
for every pair $(C,\, f)$, where $C$ is an irreducible smooth projective curve defined over $k$, and
$f\, :\, C\, \longrightarrow\, A$ is a non-constant morphism, the inequality
$\text{degree}(f^*\det E)\, >\, 0$ holds.

\item for every closed point $x\, \in\, A$, there is a line bundle $L(x)$ on $A$ such that
\begin{equation}\label{e5}
\alpha^*_x E\,=\, E\otimes L(x)\, ,
\end{equation}
where $\alpha_x$ is the morphism in \eqref{e4}.
\end{enumerate}

\begin{proposition}\label{prop1}
The above vector bundle $E$ on $A$ is ample.
\end{proposition}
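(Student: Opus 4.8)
Since condition (1) is literally the hypothesis of Proposition \ref{thm1} applied to the line bundle $\det E$, we get for free that $\det E$ is ample. The whole point is to propagate this positivity from $\det E$ to $E$ itself, and the mechanism for doing so is condition (2). First I would record that \eqref{e5} forces $E$ to be \emph{semi-homogeneous} in Mukai's sense: taking determinants in $\alpha_x^* E \cong E \otimes L(x)$ gives $\alpha_x^*\det E \cong \det E \otimes L(x)^{\otimes r}$, and since translation alters a line bundle only by an element of $\mathrm{Pic}^0(A)$, the class $L(x)^{\otimes r}$ lies in $\mathrm{Pic}^0(A)$; as $\mathrm{NS}(A)$ is torsion-free this means $L(x)\in \mathrm{Pic}^0(A)$. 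Thus $\alpha_x^* E \cong E\otimes L(x)$ with $L(x)\in\mathrm{Pic}^0(A)$, which is exactly Mukai's definition.

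The plan is to reduce to line bundles via the structure theory of semi-homogeneous bundles. I would first reduce to the case that $E$ is simple. By Mukai's classification a semi-homogeneous bundle is a direct sum of bundles of the form $F\otimes U$, with $F$ simple semi-homogeneous and $U$ unipotent (an iterated self-extension of $\mathcal O_A$). Since a direct sum is ample exactly when each summand is, and since $F\otimes U$ carries a filtration with all graded pieces equal to the ample bundle $F$ (hence is ample once $F$ is), it is enough to prove each simple constituent $F$ is ample. The content of this reduction is that condition (1) really does force every $F$ to be positive: semi-homogeneous bundles are semistable and isoclinic, so all the factors share one slope class $\ell$, and computing first Chern classes (the unipotent factors contribute nothing to $c_1$) gives $\ell = c_1(\det E)/\operatorname{rank}(E)$, an ample $\mathbb Q$-class. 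Hence each $\det F$ is ample.

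For a simple semi-homogeneous $F$ of rank $s$, Mukai's theorem furnishes an isogeny $\pi\colon B\to A$ of degree $s$ and a line bundle $M$ on $B$ with $F\cong\pi_* M$. Now I would pull back along $\pi$. As any isogeny is a Galois \'etale cover (separable; in positive characteristic one passes to the separable part) with group $\ker\pi$ acting by translations, flat base change gives $\pi^*\pi_* M\cong\bigoplus_{a\in\ker\pi} t_a^* M$, a direct sum of translates of $M$. Ampleness of a vector bundle is preserved in both directions under pullback by a finite surjective morphism, and each $t_a^* M$ is ample if and only if $M$ is; hence $F$ is ample if and only if $M$ is. The same computation gives $\pi^*\det F\cong\bigotimes_{a}t_a^* M$, which is numerically $M^{\otimes s}$ since translation acts trivially on numerical classes, so $\det F$ is ample if and only if $M$ is. Chaining these equivalences, $F$ is ample if and only if $\det F$ is ample, and the previous paragraph guarantees the latter.

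The step I expect to be the real obstacle is the reduction to the simple case, specifically verifying that positivity of $\det E$ forces each simple constituent to be positive. This is exactly where condition (2) is indispensable: for an arbitrary reducible bundle there is no such implication, as $L_1\oplus L_2$ on an elliptic curve with $\deg L_1<0<\deg L_2$ shows; that bundle has positive-degree determinant but is not ample, and it fails to be semi-homogeneous precisely because its summands have different slopes. Making the ``common ample slope'' argument precise relies on the semistability and isoclinic properties of semi-homogeneous bundles. A more streamlined but less elementary alternative would be to bypass the structure theorem and instead combine semistability with the Chern-character proportionality $\operatorname{ch}(E)=r\exp(c_1(E)/r)$ enjoyed by semi-homogeneous bundles, feeding these into a numerical ampleness criterion for semistable bundles with ample determinant.
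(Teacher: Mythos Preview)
Your proof is correct but takes a genuinely different route from the paper. The paper invokes a single structural result of Mukai (Theorem~5.8 in \cite{Muk}): condition~(2) alone guarantees an isogeny $f\colon B\to A$ such that $f^*E$ carries a filtration $0=E_0\subset E_1\subset\cdots\subset E_r=f^*E$ whose successive line-bundle quotients $E_i/E_{i-1}$ are all numerically equivalent to $E_1$. Then $f^*\det E$ is numerically $E_1^{\otimes r}$; since $\det E$ is ample by Proposition~\ref{thm1}, so is $E_1$, hence every $E_i/E_{i-1}$, hence $f^*E$ (extensions of ample bundles are ample), hence $E$ (ampleness descends along finite surjective maps). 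Once the filtration is on the table this is essentially three lines.

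Your approach instead first decomposes $E$ into simple-times-unipotent pieces, uses semistability of semi-homogeneous bundles to force all simple constituents $F$ to share the ample slope $c_1(E)/r$, and then realizes each simple $F$ as $\pi_*M$ and pulls back to a direct sum of translates of $M$. This is valid and arguably gives more structural insight into why the positivity propagates, but it invokes two separate Mukai theorems plus the slope bookkeeping you yourself flag as the delicate step, whereas the paper's filtration route bypasses the decomposition and the slope argument entirely. One further remark: your parenthetical about ``passing to the separable part'' of the isogeny in positive characteristic is a genuine loose end, since the identification $\pi^*\pi_*M\cong\bigoplus_{a\in\ker\pi}t_a^*M$ really uses \'etaleness; the paper's filtration argument does not run into this issue.
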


\begin{proof}
Since $E$ satisfies the condition in \eqref{e5}, a theorem of Mukai says that
there is an isogeny
$$
f\, :\, B\, \longrightarrow\, A
$$
such that the vector bundle $f^*E$ admits a filtration of subbundles
\begin{equation}\label{e6}
0\,=\, E_0\, \subset\,E_1\, \subset\, \, \cdots \, \subset\, E_{r-1} \, \subset\, E_r\,=\, E
\end{equation}
for which $\text{rank}(E_i)\,=\, i$, and the line bundle $E_i/E_{i-1}$ is numerically
equivalent to $E_1$ for every $1\,\leq\, i\, \leq\, r$ \cite[p.~260, Theorem 5.8]{Muk} (see also
\cite[p.~2]{MN}).

Consequently, the line bundle $\det E\, =\, \bigwedge^r E\,=\, \bigotimes_{i=1}^r
(E_i/E_{i-1})$ is numerically equivalent to the line bundle $E^{\otimes r}_1$. From
Proposition \ref{thm1} we know that $\det E$ is ample. This implies that $E^{\otimes r}_1$
is ample. Hence $E_1$ is ample. So $E_i/E_{i-1}$ is ample for every $1\,\leq\, i\, \leq\, r$.
Consequently, from \eqref{e6} it follows that $E$ is ample \cite[p.~13, Proposition 6.1.13]{La}.
\end{proof}

Let $X$ be a projective variety defined over an algebraically closed field $k$. 
A divisor $D$ on $X$ is said to be \textit{big} if there is an ample 
divisor $H$ on $X$ such that the difference $mD-H$ is linearly equivalent to an 
effective divisor for some positive integer $m$. A $\mathbb{Q}$--divisor $D$ 
is \textit{pseudo-effective} if $D+B$ is big for any big $\mathbb{Q}$--divisor $B$. 
Similarly one can define the notion of pseudo-effective $\mathbb{R}$--divisors. 
In the N\'eron--Severi space $N^1(X)_{\mathbb{R}}$, the pseudo-effective $\mathbb{R}$--divisors 
form a cone which is the closure of the cone of effective $\mathbb{R}$--divisors. 

If $\dim X\,=\,2$, and the pseudo-effective cone of $X$ is equal to
the cone of effective divisors, then 
a line bundle $L$ on $X$ is ample if and only if $L\cdot C \,>\, 0$ for every closed curve $C$ on $X$. 
But, in general, the pseudo-effective cone of a projective variety is not equal to the 
effective cone; see the example of Mumford described in \cite[Example 10.6]{Har} or 
\cite[Example 1.5.2]{La1}.

If $k$ is an algebraic closure of a finite field, Moriwaki showed that every 
pseudo-effective divisor (over $\mathbb{Q}$ or $\mathbb{R}$) is effective when $X$ is a 
projective bundle over a projective curve or when $X$ is an abelian variety (see
\cite[p.~802, Theorem 0.4]{Mo} and \cite[p.~802, Proposition 0.5]{Mo}). As our next example shows, 
this statement is false for abelian varieties over $\mathbb{C}$.

\begin{example}
Let $X$ be an elliptic curve defined over $\mathbb{C}$. Let $x\,\in\, X$ be a point of infinite order. 
Let $D\,:=\,0-x$, where $0$ is the identity element of $X$. Then $D$ is a divisor of degree 0 and 
it is pseudo-effective. However, no multiple of $D$ is effective, since $x$ has infinite order.
\end{example}

In view of Proposition \ref{prop1}, it is natural to ask the following:

\begin{question}\label{qab}
Let $A$ be an abelian variety over an algebraically closed field. Let $E$ be a vector
bundle on $A$ such that the restriction $E|_C$ is ample for every closed curve $C$ on $A$.
Is this $E$ ample?
\end{question}

\section{Equivariant vector bundles on $G/P$}\label{se3}

\begin{theorem}\label{G/P}
Let $G$ be a reductive affine algebraic group defined over $\mathbb C$, and let $P\,\subset\,
G$ be a parabolic subgroup. Fix a maximal torus $T\, \subset\, G$ such that $T\, \subset\,P$.
Let $E$ be a $T$--equivariant vector bundle on $G/P$. Then $E$ is nef
(respectively, ample) if and only if the restriction $E|_C$ of $E$
to every $T$--invariant closed curve $C$ on $G/P$ is nef (respectively, ample). 
\end{theorem}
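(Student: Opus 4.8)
The plan is to prove the nontrivial implication; the forward direction is immediate, since the restriction of a nef (resp. ample) bundle to any closed subvariety, in particular to any $T$--invariant curve, is again nef (resp. ample). For the converse I would pass to the projectivization $\pi\colon \mathbb{P}(E)\,\longrightarrow\, G/P$ and work with the tautological quotient line bundle $L\,=\,\mathcal{O}_{\mathbb{P}(E)}(1)$, using the standard fact that $E$ is nef (resp. ample) if and only if $L$ is nef (resp. ample). Because $E$ is $T$--equivariant, the torus $T$ acts on $Y\,:=\,\mathbb{P}(E)$ so that $\pi$ is $T$--equivariant and $L$ is a $T$--equivariant line bundle. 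Thus the problem is reduced to a positivity question for a single $T$--equivariant line bundle on the $T$--variety $Y$.

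The next step is to describe the $T$--invariant curves of $Y$ and to show that the hypothesis forces $L$ to be nonnegative (resp. positive) on each of them. An irreducible $T$--invariant curve $\Gamma\,\subset\, Y$ is of one of two kinds: either $\pi(\Gamma)$ is a $T$--fixed point $x\in (G/P)^T$, so that $\Gamma$ lies in the fibre $\mathbb{P}(E_x)$, or $\pi$ maps $\Gamma$ finitely onto a $T$--invariant curve $C\,\subset\, G/P$. In the first case $\Gamma$ is a curve in a projective space on which $L$ restricts to $\mathcal{O}(1)$, so $L\cdot \Gamma\,=\,\deg\Gamma\,>\,0$ and no constraint arises. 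In the second case, the normalization $\widetilde\Gamma$ is a $T$--equivariant finite cover of $C\,\cong\,\mathbb{P}^1$, and $\Gamma$ determines a quotient line bundle $Q$ of the pullback of $E|_C$ to $\widetilde\Gamma$ with $L\cdot\Gamma\,=\,\deg Q$; since $E|_C$ is nef (resp. ample) by hypothesis, its pullback under the finite map is nef (resp. ample), whence $\deg Q\,\geq\, 0$ (resp. $>\,0$). Combining the two cases, $L$ has nonnegative (resp. positive) degree on every $T$--invariant curve of $Y$.

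Finally I would invoke Kleiman's criterion together with the principle that the cone of curves of the $T$--variety $Y$ is generated by the classes of $T$--invariant curves: for every irreducible curve $\Gamma\subset Y$, choosing a one-parameter subgroup $\lambda\colon \mathbb{G}_m\to T$ and taking the flat limit $\lim_{t\to 0}\lambda(t)\cdot\Gamma$ (which exists by properness of the Chow variety) produces a $\lambda$--invariant $1$--cycle numerically equivalent to $\Gamma$; iterating over a generating set of one-parameter subgroups, and using the Bialynicki--Birula stratification to control the limit, expresses $[\Gamma]$ as an effective combination of classes of $T$--invariant curves. Granting this, $\overline{NE}(Y)$ is generated by $T$--invariant curve classes, so the degree computation of the previous paragraph shows that $L$ is nonnegative on $\overline{NE}(Y)$ in the nef case and positive on $\overline{NE}(Y)\setminus\{0\}$ in the ample case, giving nefness (resp. ampleness) of $L$ and hence of $E$. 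The main obstacle is precisely this degeneration lemma: showing that an arbitrary curve on $Y\,=\,\mathbb{P}(E)$ specializes, through the torus action, to an effective sum of genuinely $T$--invariant curves, and---for the ample statement---verifying that the cone so generated is closed (equivalently, rational polyhedral) so that strict positivity on the generating invariant curves propagates to all of $\overline{NE}(Y)\setminus\{0\}$.
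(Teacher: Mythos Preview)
Your treatment of the nef case is essentially the paper's: pass to $\mathbb{P}(E)$ with its induced $T$--action, and degenerate an arbitrary curve via successive one--parameter subgroups (a basis of the cocharacter lattice) to a $T$--invariant $1$--cycle in the same numerical class. Your explicit dichotomy for $T$--invariant curves in $\mathbb{P}(E)$ (vertical in a fibre, or lying finitely over a $T$--invariant curve of $G/P$) makes transparent what the paper handles more implicitly.

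For the ample case, however, the paper takes a different and shorter route that sidesteps exactly the obstacle you flag. Rather than proving that $\overline{NE}(\mathbb{P}(E))$ is rational polyhedral, the paper first establishes that there are only \emph{finitely many} $T$--invariant curves in $G/P$ itself, via the Bruhat decomposition: the $T$--fixed points are the finitely many cosets $wP$ with $w\in W/W_P$, and each Bruhat cell contains only finitely many $T$--invariant curves. Given this finiteness, one fixes an ample line bundle $L$ on $G/P$ and chooses a single $n\gg 0$ such that $\mathrm{Sym}^n(E)\otimes L^{-1}\big|_C$ is nef for every $T$--invariant curve $C\subset G/P$; the already--proved nef case then makes $\mathrm{Sym}^n(E)\otimes L^{-1}$ globally nef, so $\mathrm{Sym}^n(E)$ is ample and hence $E$ is ample. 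This argument never touches the cone of curves on $\mathbb{P}(E)$, which is advantageous since $\mathbb{P}(E)$ may well have infinitely many $T$--invariant curves (for instance when the $T$--weights on a fibre $E_x$ are not pairwise distinct).

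Your Kleiman--criterion approach could in principle be completed---the numerical classes of $T$--invariant curves in $\mathbb{P}(E)$ are determined by the finitely many invariant curves downstairs together with the class of a line in a fibre, so the cone they span is polyhedral---but you would have to supply that argument, and as written it remains a gap. The paper's $\mathrm{Sym}^n$ trick trades this cone analysis for the elementary finiteness statement on $G/P$.
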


\begin{proof}
Let $Y(T)$ be the group of all $1$--parameter subgroups of $T$. Note that
$Y(T)$ is a finitely generated free abelian group whose rank is equal to the
dimension of $T$. Let
\begin{equation}\label{b}
\{\lambda_1,\,\cdots,\,\lambda_n\}
\end{equation}
be a basis of the $\mathbb Z$--module $Y(T)$. 

If $E$ is nef (respectively, ample), then clearly $E|_C$ is nef (respectively, ample) for every closed
curve $C$ on $G/P$.

To prove the converse, first assume that $E$ is a $T$--equivariant vector
bundle on $G/P$ such that its restriction $E|_C$ to every
$T$--invariant curve $C\,\subset\, G/P$ is nef.

Let $$\pi\,:\, \mathbb{P}(E) \,\longrightarrow\, G/P$$ be the projective bundle
over $G/P$ parametrizing the hyperplanes in the fibers of $E$. The tautological
relative ample line bundle over
$\mathbb{P}(E)$ will be denoted by $\mathcal{O}_{\mathbb{P}(E)}(1)$.
To prove that $E$ is nef, we need to
show that $\mathcal{O}_{\mathbb{P}(E)}(1)|_D$ is nef for every closed curve
$D\,\subset\, \mathbb{P}(E)$. Note that if $\pi(D)$ is a point, then 
$\mathcal{O}_{\mathbb{P}(E)}(1)|_D$ is ample, because 
$\mathcal{O}_{\mathbb{P}(E)}(1)$ is relatively ample. 

Therefore, we can assume that $\pi(D)$ is a closed curve in $G/P$. Let $\widetilde{D_1}$ be
the flat limit of the curves $\lambda_1(t)D$ (see \eqref{b}) as $t \,\to\, 0$. In other
words, $\widetilde{D_1}$ is a $1$--cycle which corresponds to the limit of the points
$\lambda_1(t)[D]$ (as $t \,\to\, 0$) in the Hilbert scheme of curves in $\mathbb{P}(E)$. 
Note that since $E$ is $T$--equivariant, the $1$--parameter subgroup
$\lambda_1$ acts on the Hilbert scheme of curves in
$\mathbb{P}(E)$. It follows that the $1$--cycle $\widetilde{D_1}$ 
and $D_1: \,= \, \pi(\widetilde{D_1})$ are $\lambda_1$--invariant. Now let
$\widetilde{D_2}$ be the flat limit of $\lambda_2(t)\widetilde{D_1}$
as $t \,\to\, 0$. Since $\lambda_1$ and $\lambda_2$ commute, we see that 
$\widetilde{D_2}$ and $\pi(\widetilde{D_2})$ are invariant under both
$\lambda_1$ and $\lambda_2$. Continuing this way, we obtain a $1$--cycle
$\widetilde{D_n}$ on $\mathbb{P}(E)$ such that both 
$\widetilde{D_n}$ and $\pi(\widetilde{D_n})$ are invariant under
$\lambda_1,\,\cdots, \,\lambda_n$. Consequently, both 
$\widetilde{D_n}$ and $\pi(\widetilde{D_n})$ are $T$--invariant. 

Now, by the assumption on $E$, we have
\begin{equation}\label{in}
{\rm degree}(\mathcal{O}_{\mathbb{P}(E)}(1)|_{\widetilde{D_n}}) \,\ge\, 0\, .
\end{equation}
Since the curve $D$ is linearly equivalent to $\widetilde{D_n}$, from \eqref{in} it follows immediately
that ${\rm degree}(\mathcal{O}_{\mathbb{P}(E)}(1)|_D)\,\ge\, 0$. This proves that $E$ is nef if $E|_C$ is 
nef for every closed $T$--invariant curve $C \,\subset\, G/P$.

Next we shall prove that $E$ is ample if $E|_C$ is ample for every $T$--invariant
closed curve $C \,\subset\, G/P$.

Note that every line bundle on $G/P$ is
$T$--equivariant. Hence if $F$ is a $T$--equivariant vector bundle on $G/P$, then so is
$F \otimes L$ for any line bundle $L$ on $G/P$. 

We claim that the set of $T$--invariant closed curves in $G/P$ is
finite.

To prove the above claim, let $B$ be a Borel subgroup of $G$ containing $T$
and contained in $P$. Then $B$ acts on $G$ via left--translations.
Let $W\,:=\, N_G(T)/T$ be the corresponding Weyl group, and let $W_P \,\subset\, W$
be the subgroup consisting of elements that preserve $P$. Note that $G$ is the disjoint union of
double cosets $BwP$, where $w$ runs through $W/W_P$. This gives the
Bruhat decomposition $$G/P \,=\, \cup_{w \in W/W_P} BwP$$
(see \cite[\S~29.2]{Hu}, \cite[\S~29.3]{Hu}). It is clear that
the $T$--fixed points in $G/P$ are precisely $wP$ for every $w \,\in\,
W/W_P$. Further, in each of the $B$--orbits $BwP$ in $G/P$, there are
only finitely many $T$--invariant curves, namely the
images of the subgroups generated by the root vectors in $B$. Since $W$ is
finite, we conclude that $G/P$ has only finitely many $T$--fixed points
and $T$--invariant curves. This proves the claim.

The ampleness of $E$ now follows by a standard
argument, which we include for the convenience of the reader. Fix an
ample line bundle $L$ on $G/P$. 
Let $\text{Sym}^n(E)$ denote
the $n$--th symmetric power of $E$. Then for $n$ sufficiently large,
we have that $\text{Sym}^n(E)\otimes L^{-1}|_C$ is nef for every $T$--invariant curve $C$. By
the first part of the theorem, the vector bundle $\text{Sym}^n(E)\otimes L^{-1}$ is nef. Since
$L$ is ample, this implies that $\text{Sym}^n(E)\,=\, (\text{Sym}^n(E)\otimes L^{-1})\otimes L$
is ample, and consequently $E$ itself is ample (see \cite[Proposition 6.2.11]{La} and 
\cite[p.~67, Proposition 2.4]{Ha}). 
\end{proof}

\begin{proposition}\label{blowup}
Let $G$, $P$ and $T$ be as in Theorem \ref{G/P}.
Let $x \,\in\, G/P$ be a $T$--fixed point, and let $\varpi\,:\, \widetilde{X}\,\longrightarrow\,
G/P$ denote the blow-up of $G/P$ at $x$. Then the following three statements hold:
\begin{enumerate}
\item The action of $T$ lifts to $\widetilde{X}$. 

\item Let $F$ be a $T$--equivariant vector bundle on $\widetilde{X}$. Then $F$ is nef if and 
only if the restriction $F|_{\widetilde{C}}$ of $F$ to every $T$--invariant closed curve 
$\widetilde{C}\,\subset\, X$ is nef.

\item Let $W_x$ denote the the exceptional divisor of the blow-up $\varpi$. Let $E$ be a 
$T$--equivariant vector bundle on $G/P$. Then $(\varpi^{\star}E)\otimes 
\mathcal{O}_{\widetilde{X}}(W_x)^m$ is a $T$--equivariant vector bundle on $\widetilde{X}$ for 
every integer $m$.
\end{enumerate}
\end{proposition}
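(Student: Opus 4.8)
The plan is to take the three parts in turn. The genuine content sits in part (1), which sets up the equivariant geometry; once that is in place, part (3) is a formal consequence of standard equivariance properties, and part (2) is obtained by running verbatim the limit argument already used for the nef half of Theorem \ref{G/P}.

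For part (1), I would exploit that the blow-up is canonically determined by its center. Since $x$ is a $T$--fixed point, its ideal sheaf $\mathcal{I}_x \,\subset\, \mathcal{O}_{G/P}$ is a $T$--invariant subsheaf, so the Rees algebra $\bigoplus_{m\geq 0} \mathcal{I}_x^m$ inherits a $T$--action; hence $\widetilde{X}\,=\,\mathrm{Proj}\left(\bigoplus_{m\geq 0}\mathcal{I}_x^m\right)$ acquires an induced $T$--action making $\varpi$ equivariant. Equivalently, by the universal property of blow-ups each $t\,\in\, T$, fixing $x$, lifts uniquely to an automorphism of $\widetilde{X}$ over $G/P$, and uniqueness of the lift forces these to compose correctly, so that they assemble into an action of $T$. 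The one point to record is this uniqueness, which is exactly what guarantees an action rather than a mere family of lifts. In particular, $W_x\,=\,\varpi^{-1}(x)$ satisfies $t\cdot W_x \,=\,\varpi^{-1}(t\cdot x)\,=\,\varpi^{-1}(x)\,=\,W_x$, so the exceptional divisor is $T$--invariant; I will use this in part (3).

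For part (3), once the $T$--action is fixed, $\varpi$ is $T$--equivariant, so the pullback $\varpi^{\star}E$ of the $T$--equivariant bundle $E$ is again $T$--equivariant. Because $W_x$ is a $T$--invariant divisor on the smooth, hence normal, variety $\widetilde{X}$, the associated line bundle $\mathcal{O}_{\widetilde{X}}(W_x)$ admits a $T$--linearization; this is the one step requiring more than bookkeeping, but it is standard for a torus acting on a normal variety via an invariant Cartier divisor. Its dual and all tensor powers $\mathcal{O}_{\widetilde{X}}(W_x)^m$, $m\,\in\,\mathbb{Z}$, are then $T$--equivariant, and since a tensor product of $T$--equivariant bundles is $T$--equivariant, so is $(\varpi^{\star}E)\otimes \mathcal{O}_{\widetilde{X}}(W_x)^m$ for every $m$.

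For part (2) I would reproduce the nef half of the argument in Theorem \ref{G/P}, now on $\widetilde{X}$ in place of $G/P$; note that only the nef statement is asserted here, so the finiteness of $T$--invariant curves (needed there only for ampleness) plays no role. Only the ``if'' direction needs proof. Fix a basis $\{\lambda_1,\,\cdots,\,\lambda_n\}$ of $Y(T)$ and form $\pi\,:\,\mathbb{P}(F)\,\longrightarrow\,\widetilde{X}$ with its tautological bundle $\mathcal{O}_{\mathbb{P}(F)}(1)$; since $F$ is $T$--equivariant, $T$ acts on $\mathbb{P}(F)$ and on the Hilbert scheme of its curves. Given $D\,\subset\,\mathbb{P}(F)$ with $\pi(D)$ a curve, I successively take flat limits of $\lambda_1(t)D,\,\cdots,\,\lambda_n(t)D$ as $t\,\to\,0$ to produce a $T$--invariant $1$--cycle $\widetilde{D_n}$ that is algebraically, hence numerically, equivalent to $D$; each irreducible component of $\widetilde{D_n}$ is a $T$--invariant curve, so the hypothesis yields ${\rm degree}(\mathcal{O}_{\mathbb{P}(F)}(1)|_{\widetilde{D_n}})\,\ge\,0$, and numerical equivalence transfers this to $D$, proving $F$ nef. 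The only point requiring care is that the whole construction is available, i.e.\ that $\widetilde{X}$ is projective so that the Hilbert scheme and the valuative-criterion extension of $t\mapsto \lambda_i(t)[D]$ over $t=0$ exist; this holds because $\widetilde{X}$ is projective over the projective variety $G/P$. The main obstacle is thus organizational rather than technical: verifying that Theorem \ref{G/P}'s flat-limit mechanism carries over unchanged once the $T$--action of part (1) is in hand.
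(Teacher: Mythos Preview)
Your proposal is correct and, for parts (2) and (3), essentially identical to the paper's argument. The only genuine difference is in part (1): you lift the $T$--action functorially, via the $T$--invariance of the ideal sheaf $\mathcal{I}_x$ and the resulting $T$--action on $\mathrm{Proj}\bigl(\bigoplus_{m\geq 0}\mathcal{I}_x^m\bigr)$ (equivalently, via the universal property of the blow-up), whereas the paper argues more geometrically by identifying the exceptional divisor with $\mathbb{P}(T_x(G/P))$, noting that the linear $T$--action on the tangent space $T_x(G/P)=\mathfrak{g}/\mathfrak{p}$ gives the $T$--action there, and then observing that $\varpi$ is an isomorphism away from $W_x$. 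Your approach is cleaner and self-evidently yields a regular action, at the cost of not exhibiting the action on $W_x$ explicitly; the paper's approach makes that action concrete (useful later when restricting to curves in $W_x$ for the Seshadri constant computation) but leaves the global regularity of the lifted action slightly implicit.
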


\begin{proof}
Since $x$ is a $T$--fixed point, the group $T$ acts on the tangent space
$T_x(G/P) = \mathfrak{g}/\mathfrak{p}$, where $\mathfrak{g}$ is the
Lie algebra of $G$ and $\mathfrak{p}$ is the Lie algebra of $P$. Then
the $T$--action on $T_x(G/P)$ decomposes it into a direct sum of
one-dimensional $T$--invariant subspaces.

Note that the exceptional divisor of the blow-up $\varpi\,:\, \widetilde{X} \,\longrightarrow\, G/P$ is
isomorphic to $\mathbb{P}(T_x(G/P))$. So $T$ acts on the exceptional divisor
of the blow-up via the linear action of $T$ on $T_x(G/P)$. Since $\varpi$
is an isomorphism outside the exceptional divisor, we conclude that
the action of $T$ lifts to all of $\widetilde{X}$. This proves (1). 

Since the action of $T$ lifts to $\widetilde{X}$, the proof of (2) goes through
exactly as the proof of the analogous statement in Theorem
\ref{G/P}. Note that in the proof of Theorem \ref{G/P} we only used
the $T$--action on $G/P$. 

Now we prove (3). 
Since $T$ acts on the exceptional divisor $W_x$ of $\varpi$, 
we conclude that $\mathcal{O}_{\widetilde{X}}(W_x)^m$ is a $T$--equivariant line
bundle for every integer $m$. Hence if $E$ is a $T$--equivariant
vector bundle on $G/P$ then $(\varpi^{\star}E) \otimes \mathcal{O}_{\widetilde{X}}(W_x)^m$ is a 
$T$--equivariant vector bundle on $\widetilde{X}$ for every integer $m$. 
\end{proof}

Let $E$ be a vector bundle on a projective variety $X$. Take any point $x \,\in\,
X$. The Seshadri constant of $E$ at $x$ was defined in
\cite{Hac}. This definition is recalled below. 

Let $\varpi\,:\, \widetilde{X} \,\longrightarrow\, X$ denote the blow-up of $X$ at $x$.
Consider the following diagram, where $p$ and $q$ are projective bundles:
\begin{equation}\label{fig1}
\xymatrix{
\mathbb{P}(\varpi^{\star}E) \ar[r]^{\widetilde{\varpi}}\ar[d]_{q} & \mathbb{P}(E)\ar[d]^{p}\\
\widetilde{X} \ar[r]^{\varpi} & X
 }
\end{equation}

Let
\begin{equation}\label{xi}
\xi \,=\,
\mathcal{O}_{\mathbb{P}(\varpi^{\star}E)}(1)
\end{equation}
be the tautological bundle on $\mathbb{P}(\varpi^{\star}E)$. Let 
$Y_x \,=\, p^{-1}(x)$ and $Z_x \,=\,\widetilde{\varpi}^{-1}(Y_x)$.

The Seshadri constant of $E$ at $x$ is defined as follows: 
$$\varepsilon(E,x) \,:=\, \text{sup}\{\lambda \,\in \, \mathbb{Q}_{> 0} ~\mid ~ \xi - \lambda
Z_x\ \text{~is nef}\}.$$

Here $\mathbb{Q}_{>0}$ denotes the set of positive rational numbers. 
For more details on Seshadri constants of vector bundles, see
\cite{Hac}. 

Now we work with the notation in Theorem \ref{G/P}.
Let $E$ be a $T$--equivariant vector bundle on $G/P$. It is known that
each $T$--invariant closed curve $C \,\subset\, G/P$ is smooth rational. Indeed, let
$C$ be a $T$--invariant closed curve. Since there are only finitely many
$T$--fixed points in $G/P$, there must exist a point $x \,\in\, C$ which is
not fixed by $T$. Now consider the morphism $T \,\longrightarrow\, C$ which sends $t
\in T$ to $t\cdot x \,\in\, C$. This is a non-constant morphism from a
torus to $C$, and hence $C$ must be rational. In the special case when 
$G \,=\,{\rm GL}_n({\mathbb C})$, and $P$ is the Borel subgroup of upper triangular matrices
in $G$, a different proof for this fact can be
found in \cite[Page 44, 1.3.4, Example 2]{Bri}. 

Let $C \,\subset\, G/P$ be a
$T$--invariant closed curve. From a theorem of Grothendieck we know
that the restriction of $E$ to $C$ has the form
$$E|_C\, =\, \mathcal{O}_C(a_1) \oplus \ldots \oplus \mathcal{O}_C(a_n)$$ for some
integers $a_1,\,\cdots,\,a_n$ \cite[p.~122, Th\'eor\`eme 1.1]{Gr}.

\begin{theorem}\label{sc}
Let $G$ be a complex reductive group, and let $P$ be a parabolic
subgroup of $G$ containing a maximal torus $T$. Let $E$ be a
$T$--equivariant nef vector bundle on $X \,=\, G/P$ of rank $n$, and let $x \,\in\, X$ be a $T$--fixed
point. Then 
$$\varepsilon(E,x) \,=\, \text{min}\{a_i(C)\}_{C,i}\, ,$$ where the minimum is taken over
all $T$--invariant curves $C \,\subset\, G/P$ passing through $x$ and integers
$\{a_1(C),\,\cdots,\, a_n(C)\}$ such that $E|_C \,= \,\mathcal{O}_C(a_1(C)) \oplus \cdots \oplus
\mathcal{O}_C(a_n(C))$. 
\end{theorem}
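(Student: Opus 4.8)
The plan is to prove the two inequalities $\varepsilon(E,x)\,\le\,\mu$ and $\varepsilon(E,x)\,\ge\,\mu$, where $\mu\,:=\,\min_{C,i}a_i(C)$, working throughout on the total space $\mathbb{P}(\varpi^{\star}E)$ of diagram \eqref{fig1}. By Proposition \ref{blowup}(1),(3) the torus $T$ acts on $\widetilde{X}$ and $\varpi^{\star}E$ is $T$--equivariant, so $T$ acts on $\mathbb{P}(\varpi^{\star}E)$ with both $\xi$ and $Z_x$ being $T$--invariant. I would first record two facts. First, $Z_x\,=\,q^{-1}(W_x)\,=\,q^{\star}W_x$: a point over $\widetilde{z}\,\in\,\widetilde{X}$ lies in $Z_x\,=\,\widetilde{\varpi}^{-1}(p^{-1}(x))$ exactly when $\varpi(\widetilde{z})\,=\,x$, i.e. when $\widetilde{z}\,\in\,W_x$, and since $W_x$ is Cartier and $q$ is flat this is a divisor pullback. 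Second, $\xi\,=\,\widetilde{\varpi}^{\star}\mathcal{O}_{\mathbb{P}(E)}(1)$, so the hypothesis that $E$ is nef makes $\xi$ nef. The essential reduction is that nefness of the $T$--invariant $\mathbb{Q}$--line bundle $\xi-\lambda Z_x$ can be tested on $T$--invariant curves: this is precisely the flat--degeneration argument used in Theorem \ref{G/P} and Proposition \ref{blowup}(2), which degenerates an arbitrary curve $D$, via a $\mathbb{Z}$--basis $\lambda_1,\,\ldots,\,\lambda_n$ of $Y(T)$, to a $T$--invariant $1$--cycle $\widetilde{D_n}$ numerically equivalent to $D$. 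I would therefore reduce everything to computing $\deg(\xi-\lambda Z_x)|_{\Gamma}$ for $T$--invariant irreducible curves $\Gamma$.

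For the upper bound, fix a $T$--invariant curve $C\,\ni\,x$ and an index $i$. The $T$--equivariant Grothendieck splitting $E|_C\,=\,\bigoplus_j\mathcal{O}_C(a_j(C))$ gives a $T$--equivariant quotient $E|_C\,\twoheadrightarrow\,\mathcal{O}_C(a_i(C))$, hence a $T$--invariant section $\sigma_i$ of $\mathbb{P}(E|_C)\,\to\,C$ with $\deg\mathcal{O}_{\mathbb{P}(E)}(1)|_{\sigma_i}\,=\,a_i(C)$. Its strict transform $\widetilde{\sigma}_i\,\subset\,\mathbb{P}(\varpi^{\star}E)$ is $T$--invariant, lies over the strict transform $\widetilde{C}$ of $C$, and satisfies $\deg\xi|_{\widetilde{\sigma}_i}\,=\,a_i(C)$ and $\deg Z_x|_{\widetilde{\sigma}_i}\,=\,\widetilde{C}\cdot W_x$. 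Since every $T$--invariant curve is smooth rational and $x$ is a smooth point of $C$, we have $\widetilde{C}\cdot W_x\,=\,\mathrm{mult}_x C\,=\,1$, so $\deg(\xi-\lambda Z_x)|_{\widetilde{\sigma}_i}\,=\,a_i(C)-\lambda$. Nefness of $\xi-\lambda Z_x$ thus forces $\lambda\,\le\,a_i(C)$ for all $C$ and $i$, giving $\varepsilon(E,x)\,\le\,\mu$.

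For the lower bound I would set $\lambda\,=\,\mu$ and verify $\deg(\xi-\mu Z_x)|_{\Gamma}\,\ge\,0$ for every $T$--invariant curve $\Gamma$, organized by the $T$--invariant image $q(\Gamma)$. If $q(\Gamma)$ is a point, then $\Gamma$ lies in a fibre of $q$, on which $q^{\star}W_x$ is trivial while $\xi$ is relatively ample, so the degree is positive. If $q(\Gamma)\,=\,\widetilde{C}$ is the strict transform of a $T$--invariant curve $C$ with $x\,\notin\,C$, then $\widetilde{C}$ is disjoint from $W_x$, so $\Gamma\cdot Z_x\,=\,0$ and the degree equals $\deg\xi|_{\Gamma}\,\ge\,0$ because $E$ is nef. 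If $q(\Gamma)\,=\,\widetilde{C}$ is the strict transform of some $C\,\ni\,x$, I restrict $\xi-\mu Z_x$ to $q^{-1}(\widetilde{C})\,=\,\mathbb{P}(\varpi^{\star}E|_{\widetilde{C}})\,\cong\,\mathbb{P}(\bigoplus_j\mathcal{O}_C(a_j(C)))$, where it becomes $\mathcal{O}(1)-\mu f$ for $f$ the class of the fibre over $\widetilde{C}\cap W_x$; this twist equals the relative $\mathcal{O}(1)$ of $\bigoplus_j\mathcal{O}_C(a_j(C)-\mu)$, hence is nef precisely when $\mu\,\le\,\min_j a_j(C)$, which holds by the choice of $\mu$, so the degree on any such $\Gamma$ is nonnegative.

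The remaining case, $q(\Gamma)\,\subset\,W_x$ with $\Gamma\,\subset\,Z_x$ lying over a curve in the exceptional divisor, is the one I expect to be the main obstacle, since it is the only place where positivity could a priori fail and where one must rule out that curves in the blown--up locus decrease the Seshadri constant. Here I would invoke the normal bundle identity $\mathcal{O}_{\widetilde{X}}(W_x)|_{W_x}\,=\,\mathcal{O}_{\mathbb{P}(T_xX)}(-1)$ coming from the description of $\varpi$ in Proposition \ref{blowup}. It yields $\deg Z_x|_{\Gamma}\,=\,\deg\bigl(q^{\star}\mathcal{O}_{\mathbb{P}(T_xX)}(-1)\bigr)|_{\Gamma}\,<\,0$, because $q(\Gamma)$ is a genuine curve in $W_x$ and $\mathcal{O}_{\mathbb{P}(T_xX)}(1)$ is ample; meanwhile $\xi|_{Z_x}$ is the pullback of $\mathcal{O}_{\mathbb{P}(E_x)}(1)$ under the projection $Z_x\,=\,W_x\times\mathbb{P}(E_x)\,\to\,\mathbb{P}(E_x)$, so $\deg\xi|_{\Gamma}\,\ge\,0$. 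Hence $\deg(\xi-\mu Z_x)|_{\Gamma}\,=\,\deg\xi|_{\Gamma}-\mu\deg Z_x|_{\Gamma}\,>\,0$. Combining the four cases, $\xi-\mu Z_x$ is nef, so $\varepsilon(E,x)\,\ge\,\mu$, and together with the upper bound this gives $\varepsilon(E,x)\,=\,\mu$.
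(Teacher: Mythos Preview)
Your proposal is correct and follows essentially the same approach as the paper: both reduce nefness of $\xi-\lambda q^\star W_x$ to $T$--invariant curves via the flat--degeneration argument of Theorem~\ref{G/P} and Proposition~\ref{blowup}, and both carry out the same case analysis (fibre of $q$, curve missing $x$, curve through $x$, curve in $W_x$) using $\mathcal{O}_{W_x}(W_x)=\mathcal{O}_{W_x}(-1)$ and the splitting $E|_C=\bigoplus\mathcal{O}_C(a_j(C))$. The only cosmetic differences are that you separate the upper bound by explicitly exhibiting the section curves $\widetilde{\sigma}_i$ and organize the case split on $\mathbb{P}(\varpi^\star E)$ rather than on $\widetilde{X}$; note that your strict inequality in the $q(\Gamma)\subset W_x$ case should be $\ge 0$ (it can be zero when $\mu=0$), but only nonnegativity is needed.
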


\begin{proof}
Let $W_x$ denote the exceptional divisor of the blow-up $$\varpi\,:\, \widetilde{X} \,\longrightarrow\,
X \,=\, G/P$$ at the point $x \,\in\, G/P$. By definition, the Seshadri constant of $E$ at
$x$ is given by the following: 
$$\varepsilon(E,x) \,=\, \text{sup}\{\lambda \in \mathbb{Q}_{> 0} ~\mid ~ \xi - \lambda q^{\star}(W_x)\
\text{~is nef}\}\, ,$$
where $q$ is the map in \eqref{fig1}, $\xi$ is the line bundle in \eqref{xi} and $W_x$
is the exceptional divisor as in the proof of Proposition \ref{blowup}(3).

We claim that $\xi - \lambda q^{\star}(W_x)$ is nef 
if $\mathcal{O}_{\mathbb{P}(\varpi^{\star}E|_{\widetilde{C}})}(1)-\lambda
q_1^{\star}(W_x|_{\widetilde{C}})$ 
is nef for every $T$--invariant closed curve $\widetilde{C} \,\subset\,
\widetilde{X}$. See the following diagram:
$$
\xymatrix{
 \mathbb{P}(\varpi^{\star}E|_{\widetilde{C}}) \ar[r]\ar[d]_{q_1} & \mathbb{P}(\varpi^{\star}E)
\ar[r]^{\widetilde{\varpi}}\ar[d]_{q} & \mathbb{P}(E)\ar[d]^{p}\\
 \widetilde{C} \ar@{^{(}->}[r] & \widetilde{X} \ar[r]^{\varpi} &
 X = G/P
 }
$$

The above claim essentially follows
from the proof of Theorem \ref{G/P} and Proposition \ref{blowup}(2). 
Indeed, to prove that $\xi - \lambda q^{\star}(W_x)$ is nef, we need
to show that $(\xi - \lambda q^{\star}(W_x)) \cdot D \,\ge\, 0$ for every
closed curve $D \,\subset\, \mathbb{P}(\varpi^{\star}E)$. But, as
argued in the proof of Theorem \ref{G/P}, there exists a $T$--invariant
curve $\widetilde{D} \,\subset\, \mathbb{P}(\varpi^{\star}E)$ which is linearly
equivalent to $D$ and such that 
$\widetilde{C} \,:=\, q(\widetilde{D}) \,\subset\, \widetilde{X}$ is a $T$--invariant curve. 
But we have 
$$(\xi - \lambda q^{\star}(W_x)) \cdot \widetilde{D}\,=\,
\text{degree}(E|_{\widetilde{C}}) - \lambda (W_x \cdot \widetilde{C})
\,\ge\, 0\, .$$ The last inequality follows from the hypothesis that 
$\mathcal{O}_{\mathbb{P}(\varpi^{\star}E|_{\widetilde{C}})}(1)-\lambda
q_1^{\star}(W_x|_{\widetilde{C}})$ is nef for every invariant curve 
$\widetilde{C} \,\subset\, \widetilde{X}$.
This proves the claim.

Now let $\widetilde{C} \,\subset\, \widetilde{X}$ be any $T$--invariant
curve. We know that $\widetilde{C}$ is isomorphic to the projective
line $\mathbb{P}^1$. 
We will show below that $\mathcal{O}_{\mathbb{P}(\varpi^{\star}E|_{\widetilde{C}})}(1)-\lambda
q_1^{\star}(W_x|_{\widetilde{C}})$ is nef. 

First suppose that $\widetilde{C}$ is contained in the exceptional
divisor $W_x$ of the blow-up $\varpi\,:\, \widetilde{X} \,\longrightarrow\, X$. 
Note that $W_X$ is isomorphic to a projective space, and 
$$\mathcal{O}_{W_x}(W_x) \,=\, \mathcal{O}_{W_x}(-1)\, .$$ Hence we have
$-\lambda(W_x|_{\widetilde{C}}) \,=\,\mathcal{O}_{\widetilde{C}}(\lambda)$. 
Since $E$ is nef on $X$ by hypothesis, it follows that 
$\mathcal{O}_{\mathbb{P}(\varpi^{\star}E|_{\widetilde{C}})}(1)-\lambda
q_1^{\star}(W_x|_{\widetilde{C}})$ is nef for every $\lambda \,>\, 0$. 

Now suppose that $\widetilde{C}$ is not contained in $W_x$, and let $C
\,=\, \varpi(\widetilde{C})$. Then $C \,\subset\, X$ is a $T$--invariant curve. If
$x\,\notin\, C$, then $W_X|_{\widetilde{C}} \,=\,
\mathcal{O}_{\widetilde{C}}$, and 
$\mathcal{O}_{\mathbb{P}(\varpi^{\star}E|_{\widetilde{C}})}(1)$ is nef
because $\varpi^{\star}E|_{\widetilde{C}}$ is so.

So assume that $x \,\in\, C$. Then $W_x \cdot \widetilde{C} \,=\, 1$, since $C$ is a
smooth curve. Let $a_1(C),\,\cdots,\,a_n(C)$ be non-negative integers such that
$E|_C \,=\, \mathcal{O}_C(a_1(C)) \oplus \ldots \oplus \mathcal{O}_C(a_n(C))$. Then 
$\mathcal{O}_{\mathbb{P}(\varpi^{\star}E|_{\widetilde{C}})}(1)-\lambda
q_1^{\star}(W_x|_{\widetilde{C}})$ is nef if and only if 
$\mathcal{O}_{\widetilde{C}}(a_1(C)-\lambda) \oplus \cdots \oplus \mathcal{O}_{\widetilde{C}}(a_n(C)-\lambda)$ is nef. 
Now. $\mathcal{O}_{\widetilde{C}}(a_1(C)-\lambda) \oplus \cdots \oplus
\mathcal{O}_{\widetilde{C}}(a_n(C)-\lambda)$ is nef
if and only if $\lambda \,\le\,
\text{min}\{a_1(C),\,\cdots,\,a_n(C)\}$. Running over all $T$--invariant curves in
$\widetilde{X}$ the theorem is proved. 
\end{proof}

\begin{remark}
A similar computation of Seshadri constants was carried out for equivariant
vector bundles on toric varieties in \cite[Proposition 3.2]{HMP} which
motivated our result. 
\end{remark}

The following is well-known, but we give this example to show how our results give a
simpler argument. 

\begin{example}
Let $0 < d < n$ be integers and let $X \,=\, {\rm Gr}(d,n)$ be the Grassmannian
of $d$--dimensional subspaces of $\mathbb{C}^n$. Then one has the
universal exact sequence
$$0\,\longrightarrow\, S\,\longrightarrow\, X \times \mathbb{C}^n
\,\longrightarrow\, Q \,\longrightarrow\, 0$$ of vector
bundles on $X$, where the fiber of $S$ over a point $x \,\in\, X$
is the $d$--dimensional subspace $S_x$ of $\mathbb{C}^n$
corresponding to $x \,\in\, X$ while the fiber of $Q$ over $x$ is the
quotient vector space $\mathbb{C}^n/S_x$. Further, it is easy to prove that
all three vector bundles in the above exact sequence are ${\rm GL}_n({\mathbb C})$--equivariant, and
hence they are $T$--equivariant, where $T \,\subset\, {\rm GL}_n({\mathbb C})$ is the subgroup
of diagonal matrices.

Now let $C \,\subset\, X$ be a
$T$--invariant curve. Then $Q|_C \,=\, \mathcal{O}_C(1) \oplus
\mathcal{O}_C^{\oplus(n-d-1)}$. Note that $C \,\cong\, {\mathbb C}\mathbb{P}^1$. 
Hence $Q|_C$ is always nef and it is ample if and only if $n-d\,=\,1$. So, by Theorem
\ref{G/P}, the vector bundle $Q$ itself is always nef and it is ample if and only if $n-d\,=\,1$.
Moreover, if $Q$ is ample, then $\varepsilon(Q,x) \,=\, 1$ for every $T$--fixed point $x \,\in\, X$ by
Theorem \ref{sc}.

The tangent bundle $\mathcal{T}_X$ is isomorphic to
$Hom(S,Q) \,=\, S^{\vee} \otimes Q$, where $S^{\vee}$ denotes the dual of
$S$. Arguing as above, we conclude that $\mathcal{T}_X$ is ample if
and only if either $S$ or $Q$ is a line bundle which is the case
precisely when $d\,=\,1$ or $n-d\,=\,1$. Of course, this happens if and only
if $X$ is the projective space ${\mathbb C}\mathbb{P}^n$. 

Finally note that the determinant bundle $\det (Q)$ is $T$--equivariant,
because $Q$ is so. If $C$ is any $T$--invariant closed curve in $X$, then
$\det (Q)|_C \,=\, \mathcal{O}_C(1)$. Hence the vector bundles $Q \otimes
\text{det}(Q)$ and $S^{\vee} \otimes \text{det}(Q)$ are both ample. 
\end{example}

\section*{Acknowledgements}

The first author is supported by a J. C. Bose Fellowship, and school of mathematics, TIFR, is
supported by 12-R$\&$D-TFR-5.01-0500.
The second author is partially supported by DST SERB MATRICS grant MTR/2017/000243
and also a grant from Infosys Foundation. The authors thank the
National Institute of Science Education and Research (NISER),
Bhubaneswar for hospitality while a part of this work was carried out. 


\end{document}